\newtheorem{thm}{Theorem}[section]
\newtheorem{lem}[thm]{Lemma}
\newtheorem{prop}[thm]{Proposition}
\newtheorem{cor}[thm]{Corollary}
\theoremstyle{definition}
\newtheorem{dfn}[thm]{Definition}
\newtheorem{chunk}[thm]{}  
\newtheorem{rem}[thm]{Remark}
\newtheorem{notation}[thm]{Notation}
\theoremstyle{remark}
\numberwithin{equation}{thm}
\def\add{\operatorname{add}}
\def\C{\mathcal{C}}
\def\QCoh{\operatorname{QCoh}}
\def\spec{\operatorname{Spec}}
\def \L {\mathbf{L}}
\def\tr{\operatorname{tr}}
\def\X{\mathcal{X}}
\def \R {\mathcal{R}} 
\def \L {\mathcal{L}}
\def \D {\operatorname{D}}
\def \Mod {\operatorname{Mod}}
\begin{document}
%\allowdisplaybreaks
\title{On splitting of morphisms induced by unit map of adjoint functors}
\author{Souvik Dey}
\address{S.~Dey,
Faculty of Mathematics and Physics,
Department of Algebra,
Charles University, 
Sokolovsk\'{a} 83, 186 75 Praha, 
Czech Republic}
\email{souvik.dey@matfyz.cuni.cz} 
\subjclass[2020]{18A40, 18E05, 18F20, 18G80, 14A30, 14B05, 14F08, 13C60
, 13D02}
\keywords{adjoint functors, derived categories, coherent sheaves, (derived) splinters.}

\begin{abstract}
    Given a right adjoint functor  between triangulated categories and an object in the target category, we show that the unit map of adjunction on that object is a split monomorphism if and only if the object belongs to the additive closure of (all possible) shifts of an object in the image of the functor. Applications to geometric context related to (derived) splinters and rational singularities are given. 
\end{abstract}  

\maketitle

\section{Introduction} 

"Natural" vs. "unnatural" splitting is a recurring scenario of many branches of homological algebra. For instance, consider a short exact sequence of modules $0\to M \to M\oplus N\to N\to 0$ over a commutative Noetherian ring. In general, although this sequence may look split, but it can indeed fail to be split, however, under the additional hypothesis of $M,N$ being finitely generated, it does hold that such an exact sequence is indeed split (\cite{miyata}). In this article, we are concerned with another kind of splitting phenomena. Namely, let $\R: \mathcal T \to \mathcal S$ be a functor between additive categories admitting a left adjoint $\L:\mathcal S \to \mathcal T$. By definition, these come with unit $\eta: 1_{\mathcal S} \to \R\L$ and counit $\varepsilon: \L\R \to 1_{\mathcal S}$ transformations. Our main starting point of this article is to understand: given $M\in \mathcal S$, when is the natural map $\eta_M:M\to \R\L(M)$ split? This is in particular motivated by the following scenario: Let $f: S\to X$ be a morphism of schemes, then the derived pushforward $\mathbf Rf_*: \D(\QCoh S)\to \D(\QCoh X)$ between the derived categories of quasi-coherent sheaves is a right-adjoint to the derived pullback $\mathbf Lf^*: \D(\QCoh X)\to \D(\QCoh S)$ and the ordinary pushforward $f_*: \QCoh S\to \QCoh X$ is a right-adjoint to the  pullback $f^*: \QCoh X \to \QCoh S$. Remembering that $\mathbf Lf^* \mathcal O_X=f^*\mathcal O_X=\mathcal O_S$, one then has the following naturally associated concepts coming from splitting of the unit map of adjunction

\begin{dfn}(\cite[Definition 1.3]{bhatt}) A scheme $S$ is called a derived splinter, or simply a $D$-splinter, if for any proper surjective map $f: X\to S$, the natural  map $\mathcal O_S\to \mathbf Rf_*\mathcal O_X$ splits. 
\end{dfn}

\begin{dfn}(\cite[Definition 1.2]{bhatt}) A scheme $S$ is called a splinter, if for any finite surjective map $f: X\to S$, the natural  map $\mathcal O_S\to f_*\mathcal O_X$ splits. 
\end{dfn}

The importance of the definition of derived splinter arises from the fact that for schemes of finite type over a field of characteristic $0$, derived splinters are exactly rational singularities (\cite[Theorem 2.12]{bhatt}). 

 If $X$ is a normal integral scheme, then the existence of  a splitting $\mathcal O_S\to f_*\mathcal O_X$ for a morphism $f:X \to S$ implies $S$ is also normal integral by Proposition \ref{splnorm} (compare with the first half of \cite[Example 2.1]{bhatt}). 

Our main technical result, Lemma \ref{wellknown}, says that as soon as $M\in \mathcal S$ is a direct summand of a finite direct sum of copies of an object in the image of $\R : \mathcal T\to \mathcal S$, then the natural map $\eta_M:M\to \R\L(M)$ is a split monomorphism. When $\mathcal T$ and $\mathcal S$ are triangulated categories and the functor $\R$ commutes with shift, then it is also enough to assume that $M$  is a direct summand of a finite direct sum of copies of shifts of an object in the image of $\R : \mathcal T\to \mathcal S$, see Proposition \ref{levelsplit}. As  one of our  applications, we deduce the following result which is motivated by the notions of splinters and derived splinters.

\begin{thm}[\Cref{splinter}]\label{first} Let $f: X\to S$ be a morphism of Noetherian schemes. Assume that there exists $B\in \operatorname{D}(\operatorname{QCoh} X)$ such that $\mathcal O_S \in \langle \mathbf R f_* B\rangle_1 $. Then, the following are true:

\begin{enumerate}[\rm(1)]
    \item The natural map $\mathcal O_S \to \mathbf R f_* \mathcal O_X$ is a split monomorphism.

    \item If $f$ is an affine morphism and $X$ is an integral normal scheme, then $S$ is an integral normal scheme. 
    \end{enumerate}
    
\end{thm}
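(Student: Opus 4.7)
The plan is to deduce both parts from the general splitting criterion \Cref{levelsplit}, applied to the adjunction $\mathbf L f^* \dashv \mathbf R f_*$ between the triangulated categories $\D(\QCoh X)$ and $\D(\QCoh S)$, combined with \Cref{splnorm} for the conclusion in part (2).

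For part (1), I would first note that $\mathbf R f_*$ is a triangulated right adjoint (hence commutes with shifts), with left adjoint $\mathbf L f^*$ satisfying $\mathbf L f^* \mathcal O_S \simeq \mathcal O_X$. The hypothesis $\mathcal O_S \in \langle \mathbf R f_* B\rangle_1$ says precisely that $\mathcal O_S$ is a direct summand of a finite direct sum of shifts of $\mathbf R f_* B$, which is an object in the image of $\mathbf R f_*$. Invoking \Cref{levelsplit} then immediately yields that the unit map
\[
\eta_{\mathcal O_S}\colon \mathcal O_S \longrightarrow \mathbf R f_* \mathbf L f^* \mathcal O_S \simeq \mathbf R f_* \mathcal O_X
\]
is a split monomorphism.

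For part (2), I would use that when $f$ is affine, $f_*$ is exact on quasi-coherent sheaves, so all higher direct images $R^i f_* \mathcal O_X$ vanish for $i \geq 1$ and $\mathbf R f_* \mathcal O_X \simeq f_* \mathcal O_X$ is concentrated in degree zero. Since $\mathcal O_S$ likewise lies in the heart, morphisms between them in $\D(\QCoh S)$ coincide with morphisms in $\QCoh S$, so the derived splitting produced in (1) descends to an honest splitting $\mathcal O_S \to f_* \mathcal O_X$ of quasi-coherent sheaves. Then \Cref{splnorm} applies directly: given that $X$ is integral normal, the existence of such a splitting forces $S$ to be integral normal as well.

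I do not anticipate any serious obstacle, since the conceptual machinery is already in place: the main task is simply to recognize the level-one thickening $\langle \mathbf R f_* B\rangle_1$ as exactly the additive-closure-of-shifts hypothesis in \Cref{levelsplit}, and then to pass from a derived-category splitting to an abelian-category one under affineness. The mildest point requiring care is the latter translation, but it is automatic once one observes that both source and target of the splitting sit in the heart of the standard $t$-structure.
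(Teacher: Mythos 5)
Your proposal is correct and follows essentially the same route as the paper: part (1) is exactly the paper's application of \Cref{levelsplit} to the adjunction $\mathbf L f^* \dashv \mathbf R f_*$ with $\mathbf L f^*\mathcal O_S \simeq \mathcal O_X$, and part (2) uses affineness to identify $\mathbf R f_*$ with $f_*$ and then invokes \Cref{splnorm}, just as the paper does. Your explicit remark that the derived splitting descends to an honest splitting in $\operatorname{QCoh} S$ because both objects lie in the heart is a point the paper leaves implicit, but it is the same argument.
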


We refer the reader to Definition \ref{def:thick_subcategory_triangulated} for any undefined notation(s) used in \Cref{first}.   

As an immediate consequence of this,  Proposition \ref{splnorm}, and the proof of \cite[Theorem 2.12]{bhatt}, we are able to say the following:  

\begin{thm}[Corollary \ref{dsplnorm}]\label{1.3} Let $S$ be a scheme of finite type over a field of characteristic $0$. If there exists a resolution of singularities $f: X\to S$ such that $\mathcal O_S$ is a direct summand of a finite direct sum of copies of shifts of $\mathbf R f_* B$  for some $B\in \operatorname{D}(\operatorname{QCoh} X)$, then $S$ has rational singularities. 
\end{thm}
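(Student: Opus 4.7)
The plan is to combine Theorem \ref{first}(1) with Proposition \ref{splnorm} and the proof of \cite[Theorem 2.12]{bhatt}. First, I would observe that the hypothesis is just an unpacking of the definition of $\langle - \rangle_1$: saying that $\mathcal O_S$ is a direct summand of a finite direct sum of copies of shifts of $\mathbf R f_* B$ is exactly the assertion $\mathcal O_S \in \langle \mathbf R f_* B\rangle_1$. Applying Theorem \ref{first}(1) to the adjunction $\mathbf L f^* \dashv \mathbf R f_*$ between $\D(\QCoh X)$ and $\D(\QCoh S)$ then yields at once that the unit $\mathcal O_S \to \mathbf R f_*\mathcal O_X$ is a split monomorphism in $\D(\QCoh S)$.

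Next, I would verify normality. Taking $H^0$ of this split monomorphism produces a split monomorphism of $\mathcal O_S$-modules $\mathcal O_S \to f_*\mathcal O_X$. Since $f$ is a resolution of singularities, $X$ is smooth, in particular normal and integral, so Proposition \ref{splnorm} forces $S$ to be normal and integral.

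Finally, with $S$ normal, $X$ smooth, and the derived unit $\mathcal O_S \to \mathbf R f_*\mathcal O_X$ split, one is precisely in the setting handled in the proof of \cite[Theorem 2.12]{bhatt} (which at its core invokes a Kov\'acs-type rigidity statement). That argument shows that, in characteristic zero, the existence of a single resolution of singularities admitting a split derived unit already forces $S$ to have rational singularities. I expect this last invocation to be the main conceptual step: all of the essential geometric input is hidden inside the appeal to the Bhatt/Kov\'acs argument. Once that is quoted, the corollary drops out immediately by stringing together Theorem \ref{first}(1), Proposition \ref{splnorm}, and the cited step from \cite[Theorem 2.12]{bhatt}.
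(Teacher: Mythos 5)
Your proposal is correct and follows essentially the same route as the paper: apply Corollary \ref{splinter} (i.e.\ Theorem \ref{first}(1)) via the adjunction $\mathbf L f^* \dashv \mathbf R f_*$ to split $\mathcal O_S \to \mathbf R f_*\mathcal O_X$, deduce normality of $S$ from Proposition \ref{splnorm} using that $X$ is smooth hence normal and integral, and then quote the proof of \cite[Theorem 2.12]{bhatt}. Your explicit step of applying $H^0$ to obtain the split monomorphism of sheaves $\mathcal O_S \to f_*\mathcal O_X$, so that $\mathcal O_S \in \add(f_*\mathcal O_X)$ and Proposition \ref{splnorm} literally applies, is a detail the paper leaves implicit, and it is a correct and welcome justification.
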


Needless to say that the converse  Theorem \ref{1.3} is immediate by the definition of schemes having rational singularities.

For the entirety of this article, the following notations have been fixed.

\begin{notation}
    Let $X$ be a Noetherian scheme. We will consider the following triangulated categories:
    \begin{enumerate}
        \item $\D(X)$ is the unbounded derived category of complexes of $\mathcal{O}_X$-modules
        \item $\D(\operatorname{Qcoh} X)$ is the unbounded derived category of complexes of $\mathcal{O}_X$-modules with quasi-coherent cohomology
        \item $\D^b({\operatorname{Coh } } X)$ and $\D^b({\operatorname{QCoh } } X)$ are the derived categories of bounded complexes of $\mathcal{O}_X$-modules with coherent and quasi-coherent cohomology, respectively.  

\item When $R$ is a commutative Noetherian ring and $X=\spec(R)$ is affine, we can naturally identify $\text{QCoh } X$ with $\text{Mod } R$, the abelian category of all $R$-modules, and $\text{Coh } X$ with $\text{mod } R$, the abelian category of finitely generated $R$-modules. 

\item Given any ring homomorphism of commutative Noetherian rings $f: R\to S$, we identify it with the induced morphism of schemes $f: \spec(S)\to \spec(R)$. 
        
        %\item $\operatorname{perf}X$ is category of perfect complexes on $X$, i.e. those objects in $D^b_{\operatorname{coh}}(X)$ which locally are quasi-isomorphic to a bounded complex of locally free sheaves of finite rank
        %\item $D_{\operatorname{sg}}(X)$ is the Verdier quotient of $D^b_{\operatorname{coh}}(X)$ by $\operatorname{perf} X$, see Remark~\ref{rmk:singularity_categories} for details.
    \end{enumerate}
\end{notation}

\section{preliminaries}

In this section, we briefly recall the notion of standard building process in triangulated categories. Some of the primary sources of references are  \cite{BVdB:2003, Rouquier:2008}.  

Let $\mathcal{T}$ be a triangulated category with shift functor $[1]\colon \mathcal{T} \to \mathcal{T}$ and $\mathcal{C}$ be a subcategory of $\mathcal{T}$.

\begin{dfn}\label{def:thick_subcategory_triangulated}
    \begin{enumerate}
        \item A triangulated subcategory $\mathcal C$ of $\mathcal{T}$ is \textbf{thick} if it is closed under direct summands. The smallest thick subcategory of $\mathcal{T}$ containing $\mathcal{C}$ is denoted $\langle \mathcal{C} \rangle$.
        \item Consider the following additive subcategories of $\mathcal{T}$:
        \begin{enumerate}
            \item $\operatorname{add}^{\Sigma}(\mathcal{C})$ is the strictly full subcategory of retracts of finite direct sum of shifts of objects in $\mathcal{C}$
            \item $\langle \mathcal{C} \rangle_0$ consists of all objects in $\mathcal{T}$ isomorphic to the zero object
            \item $\langle \mathcal{C} \rangle_1 :=\operatorname{add}^{\Sigma}(\mathcal{C})$
            
            \item $\langle \mathcal C \rangle_n := \operatorname{add}^{\Sigma}\{ M \mid \text{there exists an exact triangle } L\to M\to N\to  L[1] \text{ with } L\in \langle \mathcal{C} \rangle_{n-1}, \text{ and } N\in \langle \mathcal{C} \rangle_1 \}$ if $n\geq 2$.
        \end{enumerate}
    \end{enumerate}
\end{dfn} 

For the basics of categories and adjoint functors, we refer the reader to any text on category theory, for instance \cite[Chapter IV]{mac}.  

% \section{setup}

% For a morphism of schemes $f:X\to S$, $f^*: \QCoh(S)\to \QCoh(X)$ and $f_*: \QCoh(X)\to \QCoh(S)$ be the pull-back and push-forward functors respectively. Let $\mathbf Lf^*: \D_{\QCoh}(S)\to \D_{\QCoh}(X)$ and $\mathbf Rf_*: \D_{\QCoh}(X)\to \D_{\QCoh}(S)$ denote the derived pull-back and push-forward functors respectively. Given a morphism of rings $f:A \to B$, we similarly have restriction of scalars $f_*: \Mod B \to \Mod A$ and extension of scalars $f^*(-)=(-)\otimes_A B : \Mod A \to \Mod B$ functors.  

% For a collection of objects $\mathcal C$ in an additive category $\mathcal A$, let $\add \mathcal C$ stand for the full subcategory of objects that are retracts of finite direct sum of copies of objects from $\mathcal C$.

\section{main technical results}

For an additive category $\C$ and a subcategory $\X\subseteq \C$, $\add_{\C} \X$ will denote the smallest subcategory of $\C$ that contains $\X$ and is closed under finite direct sums and direct summands. When the ambient category $\C$ is clear from context, we drop the subscript $\C$.

It is well-known that adjoint functors between additive categories are additive. 

\begin{lem}\label{wellknown} Let $\mathcal S, \mathcal T$ be additive categories. Let $\R: \mathcal T \to \mathcal S$ be a functor admitting a left adjoint $\L:\mathcal S \to \mathcal T$. Let $\varepsilon: \L\R \to 1_{\mathcal T}$ and $\eta: 1_{\mathcal S}\to \R\L$ be the counit and unit respectively. Then, the following are true:

\begin{enumerate}[\rm(1)]
    \item If there exists objects $M,N$ in $\mathcal S$ and $\mathcal T$ respectively such that $M\in \add \left(\R(N)\right)$, then the natural  map $\eta_M: M \to \R\L(M)$ is a split monomorphism.

    \item  If there exists objects $M,N$ in $\mathcal S$ and $\mathcal T$ respectively such that $N\in \add \left(\L(M)\right)$, then   the natural map $\varepsilon_N: \L\R(N) \to N$ is a split epimorphism.  
\end{enumerate}

\end{lem}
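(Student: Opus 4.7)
The plan is to reduce both statements to the triangle (zigzag) identities of the adjunction and then propagate the splitting along the standard additive-closure operations (finite direct sums and retracts) using naturality.

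For part (1), the starting observation is the triangle identity $\R(\varepsilon_N)\circ\eta_{\R(N)}=\id_{\R(N)}$, which says precisely that $\eta_{\R(N)}$ is split by $\R(\varepsilon_N)$. So the statement is trivially true in the special case $M=\R(N)$. I would then upgrade this to all $M\in\add(\R(N))$ in two steps. First, since $\L$ and $\R$ are additive (as noted just before the lemma), $\R\L$ is additive; together with the naturality of $\eta$ with respect to the structural inclusions and projections, this gives $\eta_{N_1\oplus\cdots\oplus N_k}=\eta_{N_1}\oplus\cdots\oplus\eta_{N_k}$ up to the canonical isomorphism, so $\eta$ splits on any finite direct sum of copies of $\R(N)$ by taking the direct sum of the retractions $\R(\varepsilon_N)$. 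Second, suppose $M$ is a retract of some $M'$ for which $\eta_{M'}$ admits a retraction $r\colon\R\L(M')\to M'$, and let $i\colon M\to M'$, $p\colon M'\to M$ satisfy $p\circ i=\id_M$. Naturality of $\eta$ applied to $i$ and $p$ gives the commuting squares $\R\L(i)\circ\eta_M=\eta_{M'}\circ i$ and $\eta_M\circ p=\R\L(p)\circ\eta_{M'}$. Defining $s:=p\circ r\circ\R\L(i)\colon\R\L(M)\to M$, the chain
\[
s\circ\eta_M=p\circ r\circ\R\L(i)\circ\eta_M=p\circ r\circ\eta_{M'}\circ i=p\circ i=\id_M
\]
exhibits $\eta_M$ as a split monomorphism. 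Combining the two steps proves (1).

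Part (2) is entirely dual: the other triangle identity $\varepsilon_{\L(M)}\circ\L(\eta_M)=\id_{\L(M)}$ says that $\varepsilon_{\L(M)}$ is a split epimorphism (split by $\L(\eta_M)$), and the same finite-direct-sum-plus-retract argument, now using additivity of $\L\R$ and naturality of $\varepsilon$ (with the roles of $i$ and $p$ swapped so that one constructs a section of $\varepsilon_N$ out of a section of $\varepsilon_{N'}$), yields the claim for every $N\in\add(\L(M))$.

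I do not expect any genuine obstacle here; the content is really just the triangle identities combined with the functoriality and additivity of $\R\L$ and $\L\R$. The only mildly delicate point is the retract step, where one has to pick the correct composite $p\circ r\circ\R\L(i)$ (respectively its dual) so that the naturality square rewrites the composition with $\eta_M$ into $p\circ i=\id_M$; everything else is formal.
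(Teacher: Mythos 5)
Your proof is correct and takes essentially the same route as the paper: the retraction you build in the retract step, $p\circ r\circ \R\L(i)$ with $r=\R(\varepsilon_N)$, $i=f$, $p=g$, is exactly the paper's composite $g\circ \R(\varepsilon_N)\circ \R\L(f)$, derived from the same triangle identity $\R(\varepsilon_N)\circ\eta_{\R(N)}=\mathrm{id}_{\R(N)}$ and naturality of $\eta$ (and dually for part (2)). The only cosmetic difference is bookkeeping: the paper absorbs finite direct sums into the object $N$ using additivity of $\R$, reducing at once to $M$ a retract of a single $\R(N)$, whereas you handle direct sums and retracts as two separate propagation steps.
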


\begin{proof} (1) Since $R$ is an additive functor, hence we may assume that $M$ is a retract of $\R(N)$ for a single object $N\in \mathcal T$.  Thus, there exists morphisms $f: M\to \R(N)$ and $g: \R(N)\to M$ such that $g\circ f=id_M$. By definition of adjunction, the composition $\R\xrightarrow{\eta \R} \R\L \R \xrightarrow{\R\varepsilon}\R$ is $1_{\R}$. We have the following diagram 

\begin{displaymath} 
\begin{tikzcd}
M \arrow[r, "f"] \arrow[d, "\eta_M"'] & \R(N) \arrow[d, "\eta_{\R(N)}"']                     \\
\R\L(M) \arrow[r, "\R\L(f)"']             & \R\L \R(N) \arrow[u, "\R(\varepsilon_N)"', shift right]
\end{tikzcd}
\end{displaymath}

By naturality of $\eta: 1_{\mathcal S} \to \R\L$, we know $\R\L(f)\circ \eta_M = \eta_{\R(N)}\circ f$. Since the composition $\R\xrightarrow{\eta \R} \R\L \R \xrightarrow{\R\varepsilon} \R$ is $1_{\R}$, hence $\R(\varepsilon_N)\circ \eta_{\R(N)}=Id_{\R(N)}$. Hence, $$g\circ \R(\varepsilon_N)\circ \R\L(f)\circ \eta_M=g\circ \R(\varepsilon_N)\circ \eta_{\R(N)}\circ f=g\circ Id_{\R(N)}\circ f=g\circ f=id_M$$ This proves our claim. 

(2) Is dual to (1). Indeed, since $\L$ is an additive functor, hence we may assume that $N$ is a retract of $\L(M)$ for a single object $M\in \mathcal S$.  Thus, If $h: N\to \L(M)$ and $j: \L(M)\to N$ are morphisms such that $j\circ h=id_N$, then $\varepsilon_N\circ \L\R(j)\circ \L(\eta_M)\circ h=id_N$. 
\end{proof} 

\begin{rem} The author is unaware if there is a variation of Lemma \ref{wellknown}(1) where one assumes $\R(N)\in \add(M)$ and concludes a splitting of $\eta_M: M\to \R\L(M)$. 
\end{rem}

Next, we aim to give some consequences of Lemma \ref{wellknown} to triangulated categories. First we need an elementary lemma which is probably well known, but we add a proof for the convenience of the reader.

\begin{lem}\label{standard} Let $\mathcal S, \mathcal T$ be triangulated categories. Let $\R: \mathcal T \to \mathcal S$ be a functor admitting a left adjoint $\L :\mathcal S \to \mathcal T$.  If $\R$ commutes with shift then so does $\L$. 
\end{lem}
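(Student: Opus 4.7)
My plan is to invoke the uniqueness of left adjoints (equivalently, Yoneda). The shift functors $[1]_{\mathcal S}$ and $[1]_{\mathcal T}$ are auto-equivalences of their respective triangulated categories, with quasi-inverses $[-1]_{\mathcal S}$ and $[-1]_{\mathcal T}$. In particular, they are themselves both left and right adjoint to $[-1]$ on each side. Composing adjunctions, the functor $[1]_{\mathcal T}\circ \L : \mathcal S\to \mathcal T$ is automatically left adjoint to $\R\circ [-1]_{\mathcal T}$, and the functor $\L\circ [1]_{\mathcal S} : \mathcal S\to \mathcal T$ is automatically left adjoint to $[-1]_{\mathcal S}\circ \R$.

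The hypothesis that $\R$ commutes with shift gives a natural isomorphism $\R\circ [1]_{\mathcal T}\cong [1]_{\mathcal S}\circ \R$, and hence (applying $[-1]$ on both sides) a natural isomorphism $\R\circ [-1]_{\mathcal T}\cong [-1]_{\mathcal S}\circ \R$. Thus the two right adjoints above are naturally isomorphic, so their left adjoints are naturally isomorphic as well, yielding $[1]_{\mathcal T}\circ \L\cong \L\circ [1]_{\mathcal S}$.

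Concretely, I would write out the chain of natural isomorphisms
\[
\Hom_{\mathcal T}(\L(M)[1], N) \cong \Hom_{\mathcal T}(\L(M), N[-1]) \cong \Hom_{\mathcal S}(M, \R(N[-1])) \cong \Hom_{\mathcal S}(M, \R(N)[-1])
\]
\[
\cong \Hom_{\mathcal S}(M[1], \R(N)) \cong \Hom_{\mathcal T}(\L(M[1]), N),
\]
where the first and fourth isomorphisms use that $[1]$ is an auto-equivalence, the second and fifth are the $\L\dashv \R$ adjunction, and the middle one uses the hypothesis. Naturality in $N$ (and in $M$) lets the Yoneda lemma produce a natural isomorphism $\L(M)[1]\cong \L(M[1])$.

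The only real subtlety is bookkeeping: one must check that the isomorphism $\R[-1]\cong [-1]\R$ extracted from $\R[1]\cong [1]\R$ is genuinely natural, and that the resulting isomorphism on the $\L$ side is natural in $M$ (not merely object-wise). Both follow formally from functoriality of each step in the Hom computation above, so I expect no substantive obstacle.
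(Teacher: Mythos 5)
Your proposal is correct and follows essentially the same route as the paper: both run the chain of natural isomorphisms $\Hom_{\mathcal T}(\L(M[n]),-)\cong\Hom_{\mathcal S}(M[n],\R(-))\cong\cdots\cong\Hom_{\mathcal T}(\L(M)[n],-)$, using the adjunction, the hypothesis $\R[1]\cong[1]\R$, and the shift being an auto-equivalence, then conclude by Yoneda. Your framing via uniqueness of left adjoints is just a repackaging of the same Yoneda argument, and your naturality bookkeeping remark is the right (and only) point requiring care.
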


\begin{proof}  Let $M\in \mathcal S$. We have the following isomorphisms of functors using adjunction properties of $\R$ and $\L$ and properties of the shift functor. 

\begin{align}
\operatorname{Hom}_{\mathcal{T}}\left(\L(M[n]),-\right)
&\cong\operatorname{Hom}_{\mathcal{S}}\left(M[n],\R(-)\right)\\
&\cong\operatorname{Hom}_{\mathcal{S}}\left(M,\R(-)[-n]\right)\\
&\cong\operatorname{Hom}_{\mathcal{S}}\left(M,\R(-)[-n])\right)\\
&\cong\operatorname{Hom}_{\mathcal{T}}\left(\L(M),(-)[-n]\right)\\
&\cong\operatorname{Hom}_{\mathcal{T}}\left(\L(M)[n],-\right)
\end{align}

By Yoneda's Lemma, we then conclude $\L(M[n])\cong \L(M)[n]$. 
\end{proof} 

Given a collection of objects $\mathcal C$ in a triangulated category $\mathcal T$, the subcategories $\langle \mathcal C \rangle_i$ were defined in \cite[3.1.1]{Rouquier:2008}, \cite[Section 2.2]{BVdB:2003}. For $i=1$, $\langle \mathcal C \rangle_1$ is nothing but the full subcategory of $\mathcal T$ consisting of all objects that are direct summands of shifts of finite direct sum of copies of objects from $\mathcal C$.  

\begin{prop}\label{levelsplit} Let $\mathcal S, \mathcal T$ be triangulated categories. Let $\R:\mathcal T \to \mathcal S$ be a functor commuting with sifts. Assume $\R$ admits a left adjoint $\L:\mathcal S \to \mathcal T$. Let $\varepsilon: \L\R \to 1_{\mathcal T}$ and $\eta: 1_{\mathcal S}\to \R\L$ be the counit and unit respectively. 
Then, the following are true:

\begin{enumerate}[\rm(1)]
    \item If there exists objects $M,B$ in $\mathcal S$ and $\mathcal T$ respectively such that $M\in \langle \R(B) \rangle_1$, then the natural  map $\eta_M : M \to \R\L(M)$ is a split monomorphism. 

    \item If there exists objects $M, B$ in $\mathcal S$ and $\mathcal T$ respectively such that $B\in \langle \L(M) \rangle_1$, then the natural  map $\varepsilon_B: \L\R(B)\to B$ is a split epimorphism.  
\end{enumerate}

\end{prop}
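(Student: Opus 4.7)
The plan is to reduce Proposition \ref{levelsplit} to Lemma \ref{wellknown} by rewriting $\langle \R(B)\rangle_1$ and $\langle \L(M)\rangle_1$ as ordinary additive closures of a single object in the image of $\R$ (respectively $\L$). The two ingredients needed are that $\R$ and $\L$ commute with the shift functor, and that they preserve finite direct sums.

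For part (1), I would first unwind the definition: $M \in \langle \R(B)\rangle_1 = \operatorname{add}^{\Sigma}(\R(B))$ means that $M$ is a retract of a finite direct sum $\bigoplus_{i=1}^{k} \R(B)[n_i]$ for some integers $n_1,\dots,n_k$. Using the hypothesis that $\R$ commutes with shifts, we rewrite each summand as $\R(B)[n_i] \cong \R(B[n_i])$. Next, I would invoke the standard fact that right adjoints preserve all limits, and in particular finite products, which in additive categories coincide with finite direct sums; hence $\bigoplus_{i=1}^{k} \R(B[n_i]) \cong \R\bigl(\bigoplus_{i=1}^{k} B[n_i]\bigr)$. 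Setting $N := \bigoplus_{i=1}^{k} B[n_i] \in \mathcal T$, this exhibits $M$ as a retract of $\R(N)$, i.e.\ $M \in \operatorname{add}(\R(N))$. Lemma \ref{wellknown}(1) then yields the desired splitting of $\eta_M \colon M \to \R\L(M)$.

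Part (2) is dual and uses Lemma \ref{standard} to supply the missing symmetry: since $\R$ commutes with shifts, so does $\L$. If $B$ is a retract of $\bigoplus_{i=1}^{k} \L(M)[n_i]$, then the same two moves apply on the left-adjoint side: $\L(M)[n_i] \cong \L(M[n_i])$ by Lemma \ref{standard}, and left adjoints preserve finite direct sums (being colimits), so $\bigoplus_{i=1}^{k}\L(M[n_i]) \cong \L\bigl(\bigoplus_{i=1}^{k} M[n_i]\bigr)$. Thus $B \in \operatorname{add}(\L(M'))$ for $M' := \bigoplus_{i=1}^{k} M[n_i] \in \mathcal S$, and Lemma \ref{wellknown}(2) gives that $\varepsilon_B \colon \L\R(B) \to B$ is a split epimorphism.

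There is no real obstacle here: the content is entirely in the observation that $\langle \R(B)\rangle_1$ and $\operatorname{add}(\R(N))$ coincide once one is allowed to absorb the shifts into the argument of $\R$ and the finite direct sum as well. The only point worth double-checking is the compatibility of $\R$ with finite direct sums, which follows formally from $\R$ being a right adjoint (and dually for $\L$ with colimits), so the whole argument is a short reduction rather than a genuine computation.
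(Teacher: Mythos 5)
Your proposal is correct and matches the paper's proof in essence: both reduce $\langle \R(B)\rangle_1$ to $\operatorname{add}(\R(N))$ for a single object $N$ by absorbing shifts via commutation with $[1]$ (using Lemma \ref{standard} for $\L$ in part (2)) and absorbing finite direct sums, then invoke Lemma \ref{wellknown}. The only cosmetic difference is that you justify preservation of finite direct sums via right/left adjoints preserving (co)limits, whereas the paper cites additivity of adjoint functors; both are valid and the arguments are otherwise identical.
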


\begin{proof} (1) By hypothesis, $M$ is a retract of finite direct sum of copies of shifts of $\R(B)$. Since $\R$ is an additive functor commuting with sift, hence $M$ is a retract of $\R(N)$ for some object $N$ in $\mathcal T$. Now the claim is immediate from Lemma \ref{wellknown}(1).  

(2)  Is dual to (1) (in view of Lemma \ref{standard}) and obtained similarly using  Lemma \ref{wellknown}(2).  
\end{proof}

\section{some applications}

We start by giving some immediate consequences of Lemma \ref{wellknown} when applied to some abelian categories. 

\begin{prop}\label{restriction} Let $f: R\to S$ be a morphism of commutative rings. If $M\in \Mod R$ and $N\in \Mod S$ are such that $M\in \add(f_* N)$, then the natural map $M\to f_*(M\otimes_R S)$ is a split monomorphism.   
\end{prop}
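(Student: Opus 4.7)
The plan is to view this as a direct instance of Lemma \ref{wellknown}(1) applied to the extension/restriction-of-scalars adjunction. The ring homomorphism $f\colon R\to S$ induces the additive functors $-\otimes_R S \colon \Mod R \to \Mod S$ and $f_*\colon \Mod S \to \Mod R$ (restriction of scalars along $f$). It is a classical fact that $-\otimes_R S$ is left adjoint to $f_*$, with the unit on an $R$-module $M$ being precisely the natural map $\eta_M\colon M \to f_*(M\otimes_R S)$, $m\mapsto m\otimes 1$, which is the map appearing in the statement of the proposition.

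With the identification $\mathcal{S} = \Mod R$, $\mathcal{T} = \Mod S$, $\R = f_*$ and $\L = -\otimes_R S$, the hypothesis $M \in \add(f_*N)$ for some $N\in \Mod S$ is exactly the hypothesis $M\in \add(\R(N))$ of Lemma \ref{wellknown}(1). Applying that lemma then immediately yields that $\eta_M\colon M \to \R\L(M) = f_*(M\otimes_R S)$ is a split monomorphism, which is the desired conclusion.

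There is essentially no obstacle: the content of the proposition lies entirely in Lemma \ref{wellknown}(1), and once the adjunction is set up correctly the result is automatic. The only thing one might briefly remark on is the standard identification of the unit of the tensor--hom / extension--restriction adjunction with the map $m\mapsto m\otimes 1$, which is a routine verification and can safely be invoked without further comment.
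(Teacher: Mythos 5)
Your proposal is correct and follows exactly the paper's own argument: the paper likewise notes that $f_*\colon \Mod S \to \Mod R$ has left adjoint $(-)\otimes_R S$ and concludes immediately by Lemma \ref{wellknown}(1). Your additional remark identifying the unit with $m\mapsto m\otimes 1$ is a harmless routine verification left implicit in the paper.
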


\begin{proof}  As  the restriction of scalars  $f_*:\Mod S \to \Mod R$ has a left adjoint $(-)\otimes_R S: \Mod R \to \Mod S$, the claim follows from Lemma \ref{wellknown}(1).  
\end{proof}

For the definition of trace ideal $\tr(-)$ of a module, we refer the reader to \cite[Section 2]{lin}. 

\begin{cor} Let $f: R\to S$ be a finite morphism of commutative Noetherian rings. If $\tr_R(f_*S)=R$, then the natural map $R\to f_* S$ is a split monomorphism.
\end{cor}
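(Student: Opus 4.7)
The plan is to show that the hypothesis $\tr_R(f_*S) = R$ is exactly the condition needed to place $R$ in $\add(f_*S)$, and then invoke Proposition \ref{restriction} with $M = R$ and $N = S$ (noting that $R \otimes_R S \cong S$, so $f_*(R \otimes_R S) = f_*S$).

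The key step is unpacking the trace ideal condition. Recall that $\tr_R(f_*S)$ is by definition the sum of images of all $R$-linear maps $f_*S \to R$. Since $\tr_R(f_*S) = R$, we have $1 \in \tr_R(f_*S)$, so there exist finitely many $\phi_1,\dots,\phi_n \in \Hom_R(f_*S, R)$ and elements $s_1,\dots,s_n \in f_*S$ with $\sum_{i=1}^n \phi_i(s_i) = 1$. I would then define
\[
\iota: R \to (f_*S)^n, \quad r \mapsto (rs_1,\dots,rs_n), \qquad \psi: (f_*S)^n \to R, \quad (t_1,\dots,t_n) \mapsto \sum_{i=1}^n \phi_i(t_i),
\]
and verify that $\psi \circ \iota = \id_R$, so $R$ is a direct summand of $(f_*S)^n$. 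This places $R \in \add(f_*S)$.

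With $R \in \add(f_*S)$ in hand, Proposition \ref{restriction} applied to $M = R$ and $N = S$ yields that the natural map $R \to f_*(R \otimes_R S) \cong f_*S$ is a split monomorphism, which is the desired conclusion.

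There is no real obstacle here; the entire argument is a routine unwinding of the trace ideal definition, and the finiteness of $f$ is not actually used in the argument itself (it is really just there to place us in the intended geometric setting), since Proposition \ref{restriction} is stated without finiteness hypotheses.
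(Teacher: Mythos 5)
Your proof is correct, and it follows the same logical skeleton as the paper's: both reduce the statement to showing $R \in \add(f_*S)$ and then invoke Proposition \ref{restriction} with $M=R$, $N=S$ (using $R\otimes_R S\cong S$). The difference is in how that reduction is justified. The paper handles it by citation: since $f$ is finite, $f_*S$ is a finitely generated $R$-module, and then \cite[Proposition 2.8(iii)]{lin} (a characterization of finitely generated modules with trace ideal equal to $R$ as generators of $\Mod R$) gives $R\in\add(f_*S)$. You instead unwind the trace ideal directly: writing $1=\sum_{i=1}^n\phi_i(s_i)$ with $\phi_i\in\Hom_R(f_*S,R)$ and checking $\psi\circ\iota=\id_R$ is a correct dual-basis-style argument, and it makes the corollary self-contained. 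Your closing observation is also accurate and slightly sharpens the statement: the finiteness of $f$ enters the paper's proof only to place $f_*S$ in the finitely generated setting where the cited result from \cite{lin} applies, whereas your hands-on argument uses only that $1$ lies in the trace ideal, which is witnessed by finitely many maps regardless of whether $f_*S$ is finitely generated. So your route is marginally more general (the hypothesis ``finite morphism'' could be dropped), at the cost of redoing a standard fact that the paper simply cites.
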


\begin{proof} Since $f$ is a finite morphism, $f_*S$ is a finitely generated $R$-module. In view of \cite[Proposition 2.8(iii)]{lin}, the claim now follows from Proposition \ref{restriction}.   
\end{proof}

% \begin{cor} Let $f: R\to S$ be a finite morphism of commutative Noetherian rings. If $\tr_R(f_*S)=R$, then the natural map $R\to f_* S$ is a split monomorphism.
% \end{cor}

% \begin{proof} In view of \cite[Proposition 2.8(iii)]{lin}, the claim follows from Proposition \ref{restriction}.   
% \end{proof}

The following Proposition \ref{splint} is interesting due to the notion of splinter \cite[Definition 1.2]{bhatt}. 

\begin{prop}\label{splint} Let $f: X\to S$ be a  morphism of Noetherian schemes. If there exists $N\in \operatorname{QCoh} X$ such that $\mathcal O_S \in \add(f_* N)$, then the natural map $\mathcal O_S \to f_*\mathcal O_X$ is a split monomorphism.
\end{prop}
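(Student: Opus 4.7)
The plan is to recognize this as a direct instance of Lemma \ref{wellknown}(1) applied to the standard adjoint pair on quasi-coherent sheaves. Concretely, I would take $\mathcal S = \QCoh S$ and $\mathcal T = \QCoh X$, and use the adjunction in which the ordinary pushforward $f_* \colon \QCoh X \to \QCoh S$ plays the role of $\R$ and the pullback $f^* \colon \QCoh S \to \QCoh X$ plays the role of $\L$. Both of these categories are additive (in fact abelian), and the adjunction $(f^*, f_*)$ is standard.

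With this identification, I would note two things. First, by hypothesis we have $\mathcal O_S \in \add(f_* N)$ for some $N \in \QCoh X$, which is exactly the assumption of Lemma \ref{wellknown}(1) with $M = \mathcal O_S$. Second, the unit map $\eta_{\mathcal O_S} \colon \mathcal O_S \to f_* f^* \mathcal O_S$ of the adjunction agrees with the natural map $\mathcal O_S \to f_* \mathcal O_X$: this uses the canonical identification $f^* \mathcal O_S \cong \mathcal O_X$ (pullback of the structure sheaf is the structure sheaf), under which the unit becomes exactly the structure-sheaf map $\mathcal O_S \to f_* \mathcal O_X$.

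Given these two observations, Lemma \ref{wellknown}(1) applies verbatim and yields that $\eta_{\mathcal O_S} \colon \mathcal O_S \to f_* \mathcal O_X$ is a split monomorphism, which is the desired conclusion. There is essentially no obstacle here; the only subtle point is checking the identification of the unit map with the natural structure-sheaf map, but this is formal and standard. The proof should therefore be very short, essentially one or two sentences invoking Lemma \ref{wellknown}(1) together with the remark that $f^*\mathcal O_S = \mathcal O_X$.
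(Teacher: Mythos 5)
Your proposal is correct and matches the paper's proof exactly: the paper likewise applies Lemma \ref{wellknown}(1) to the adjunction $f^* \dashv f_*$ on quasi-coherent sheaves, citing the standard adjointness and the identification $f^*\mathcal O_S = \mathcal O_X$. Your extra remark carefully identifying the unit with the structure-sheaf map $\mathcal O_S \to f_*\mathcal O_X$ is the same formal point the paper treats as immediate.
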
   

\begin{proof}

% Clearly, $f^*$ maps $\operatorname{Coh} S$ to $\operatorname{Coh} X$. Since $f$ is proper, hence $f_*$ maps  $\operatorname{Coh} X$ to $\operatorname{Coh} S$ (\cite[Chapter 23, Theorem 23.17.]{grtz}). Clearly, $f^*$ maps $\Coh S$ to $\Coh X$.

That $f^*: \QCoh(S)\to \QCoh(X)$ is left adjoint to $f_*: \QCoh(X)\to \QCoh(S)$ follows from \cite[Proposition 7.11.]{grtz1}. Since $f^*\mathcal O_S=\mathcal O_X$, hence the claim is immediate from Lemma \ref{wellknown}(1).  
\end{proof}

\begin{lem}\label{sumnorm} Let $B$ be a normal domain. Let $A\subseteq B$ be a subring such that there exists an $A$-linear map $\phi: B\to A$ such that $\phi|_A=id_A$. Then, $A$ is a normal domain.  
\end{lem}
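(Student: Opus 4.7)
The plan is to verify the two defining properties of a normal domain separately: being a domain, and being integrally closed in its field of fractions. The first is trivial since $A$ sits inside the domain $B$. For the second, I would start with an element $x$ in the fraction field of $A$ that is integral over $A$ and show $x\in A$ by first pushing into $B$, using the normality of $B$ to land there, and then using the retraction $\phi$ together with its $A$-linearity to pull $x$ back to $A$.

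In more detail: write $x = a/s$ with $a,s\in A$ and $s\neq 0$. Since $A \subseteq B$, the element $x$ is integral over $B$, and the inclusion of domains $A\hookrightarrow B$ gives an inclusion $\mathrm{Frac}(A)\hookrightarrow \mathrm{Frac}(B)$, so we may regard $x\in \mathrm{Frac}(B)$. Normality of $B$ then yields $x\in B$. The defining equation $sx = a$ holds inside $B$, with $a\in A$. Applying $\phi$ and using $A$-linearity together with $\phi|_A = \mathrm{id}_A$ gives
\[
s\,\phi(x) \;=\; \phi(sx) \;=\; \phi(a) \;=\; a,
\]
an equality inside $A$. Since $s\neq 0$ and $A$ is a domain, this forces $\phi(x) = a/s = x$ in $\mathrm{Frac}(A)$, so $x = \phi(x)\in A$, as desired.

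No step here looks technically difficult; the only subtlety worth double-checking is that the map $\mathrm{Frac}(A)\to \mathrm{Frac}(B)$ induced by $A\hookrightarrow B$ is injective (which is automatic because the nonzero elements of $A$ are already nonzero in the domain $B$), so the formal manipulation of $x = a/s$ in two different fraction fields is legitimate. Thus the heart of the argument is really just combining normality of $B$ with the $A$-linear splitting $\phi$ in a single line.
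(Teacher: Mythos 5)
Your proof is correct and is essentially the same as the paper's: both push the integral element $x=a/s$ into $B$ via normality, apply the $A$-linear retraction $\phi$ to the equation $sx=a$, and cancel in a domain to conclude $x=\phi(x)\in A$. The only cosmetic difference is that the paper cancels inside $B$ (getting $s=\phi(s)$ directly) while you cancel inside $\mathrm{Frac}(A)$; the underlying argument is identical.
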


\begin{proof} Let $a/b\in Q(A)\subseteq Q(B)$ be integral over $A$. Then, $a/b$ is integral over $B$, hence $a/b=s$ for some $s\in B$ as $B$ is normal. Thus, $a=bs$. Applying $\phi$, we get $a=b\phi(s)$. Thus, $bs=a=b\phi(s)$, hence $s=\phi(s)$ as $B$ is an integral domain. Thus, $a/b=\phi(s)\in A$, concluding our proof.  
\end{proof}

\begin{prop}\label{splnorm} Let $f: X\to S$ be a  morphism of Noetherian schemes. If $X$ is a normal integral scheme  and there exists $N\in \operatorname{QCoh} X$ such that $\mathcal O_S \in \add(f_* N)$, then $S$ is a normal integral scheme. 
\end{prop}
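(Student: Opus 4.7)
My plan is to begin by invoking Proposition \ref{splint}, which turns the hypothesis $\mathcal O_S \in \add(f_*N)$ into a concrete split monomorphism $\eta\colon \mathcal O_S \to f_*\mathcal O_X$ with some $\mathcal O_S$-linear retraction $\phi\colon f_*\mathcal O_X \to \mathcal O_S$. From this point the argument becomes a local-to-global affair, with Lemma \ref{sumnorm} as the local normality criterion.

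First I would record one consequence of the hypothesis that will be used twice in what follows: for every non-empty open $U \subseteq S$, the preimage $f^{-1}(U)$ is non-empty. Indeed, since $\mathcal O_S$ is a retract of $(f_*N)^m$ for some $m \geq 1$, restricting to $U$ exhibits $\mathcal O_S|_U$ as a retract of $(f_*N|_U)^m$; if $f^{-1}(U)$ were empty, then $f_*N|_U = 0$, forcing the non-zero sheaf $\mathcal O_U$ to vanish, a contradiction.

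Next I would fix a non-empty affine open $U = \spec(A) \subseteq S$ and set $V = f^{-1}(U)$. Since $X$ is integral and normal, so is the non-empty open subscheme $V$; therefore $B := \mathcal O_X(V) = \bigcap_{x \in V}\mathcal O_{X,x}$, with the intersection taken inside the common function field, is a normal domain, being an intersection of normal local subrings of a field. Taking sections over $U$ of the splitting produces a ring inclusion $A \hookrightarrow B$ (via $\eta_U$, which is injective because it admits a retraction) together with an $A$-linear map $B \to A$ restricting to the identity on $A$, so Lemma \ref{sumnorm} gives that $A$ is a normal domain.

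This shows every non-empty affine open of $S$ is the spectrum of a normal domain, so $S$ is reduced and every stalk $\mathcal O_{S,s}$ is a localization of a normal domain, hence normal. For irreducibility, which is the only genuinely non-local piece, I would use that $X$ integral implies $X$ irreducible, that $f(X)$ is then irreducible as the continuous image of an irreducible space, and that $f(X)$ is dense in $S$ by the preimage observation; so $\overline{f(X)} = S$ is irreducible. Taken together, $S$ is integral and normal. I expect the main subtlety to be the preimage observation and its double use, since ensuring $f^{-1}(U) \neq \emptyset$ for every non-empty $U$ is what both allows Lemma \ref{sumnorm} to be applied on each affine chart and delivers the irreducibility of $S$; the remaining steps are routine local-to-global bookkeeping.
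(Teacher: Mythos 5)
Your proof is correct and takes essentially the same route as the paper: invoke Proposition \ref{splint} to get the split monomorphism $\mathcal O_S \to f_*\mathcal O_X$, localize to an affine open $U=\spec(A)$ of $S$, and apply Lemma \ref{sumnorm} with $B=\mathcal O_X(f^{-1}(U))$, using that $X$ normal integral makes $B$ a normal domain. If anything you are more careful than the paper's terse four-line argument, since the two points you isolate are exactly what it leaves implicit — that $f^{-1}(U)\neq\emptyset$ for every non-empty open $U$ (without which $B$ would be the zero ring and Lemma \ref{sumnorm} inapplicable; note also that $f^{-1}(U)$ need not be affine, so one really works with $\mathcal O_X(f^{-1}(U))$ for a general open as you do) and that integrality of $S$ is a genuinely global statement, which your density-of-$f(X)$ argument settles cleanly.
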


\begin{proof} By Proposition \ref{splint}, the natural map $\mathcal O_S \to f_*\mathcal O_X$ is a split monomorphism. Being normal is an open condition (\cite[Tag 033J]{stacks-project}). Moreover,  $X$ is a normal and integral implies $\mathcal O_X(U)$ is a normal domain for every open subset $U$ of $X$ (\cite[Tag 033J, Tag 0358]{stacks-project}). Hence, we may assume both $S$ and $X$ are affine, and then we are done by Lemma \ref{sumnorm}.
\end{proof}

Next we provide some applications of  Proposition \ref{levelsplit} to certain derived categories. 
The following Corollary \ref{splinter} is interesting in view of the notion of derived splinter (also known as D-splinter) \cite[Definition 1.3]{bhatt}. 

\begin{cor}\label{splinter} Let $f: X\to S$ be a morphism of Noetherian schemes. Assume that there exists $B\in \operatorname{D}(\operatorname{QCoh} X)$ such that $\mathcal O_S \in \langle \mathbf R f_* B\rangle_1 $. Then, the following are true:

\begin{enumerate}[\rm(1)]
    \item The natural map $\mathcal O_S \to \mathbf R f_* \mathcal O_X$ is a split monomorphism.

    \item If $f$ is an affine morphism and $X$ is an integral normal scheme, then $S$ is an integral normal scheme.  
\end{enumerate}

\end{cor}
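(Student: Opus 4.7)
The plan is to prove the two parts in sequence: part (1) is essentially a one-line instance of Proposition \ref{levelsplit}(1), and part (2) reduces to Proposition \ref{splnorm} using the affineness hypothesis.

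For part (1), I will apply Proposition \ref{levelsplit}(1) with $\mathcal S = \D(\QCoh S)$, $\mathcal T = \D(\QCoh X)$, $\R = \mathbf R f_*$, and $\L = \mathbf L f^*$. First I need to confirm the hypotheses of that proposition: the derived adjunction $\mathbf L f^* \dashv \mathbf R f_*$ on unbounded derived categories of quasi-coherent sheaves is standard for morphisms of Noetherian schemes, and $\mathbf R f_*$ commutes with shifts automatically since it is a triangulated functor. Because $\mathbf L f^* \mathcal O_S = \mathcal O_X$, the unit map $\eta_{\mathcal O_S}$ is exactly the natural map $\mathcal O_S \to \mathbf R f_* \mathcal O_X$ under consideration. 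The assumption $\mathcal O_S \in \langle \mathbf R f_* B\rangle_1$ matches the input of Proposition \ref{levelsplit}(1) verbatim, so that proposition immediately supplies the claimed split monomorphism.

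For part (2), the plan is to upgrade the splitting of part (1) to a splitting in $\QCoh S$ and then invoke Proposition \ref{splnorm} with $N = \mathcal O_X$. Affineness of $f$ forces $R^i f_* \mathcal O_X = 0$ for $i>0$, so $\mathbf R f_* \mathcal O_X$ is canonically isomorphic to $f_* \mathcal O_X$ and concentrated in degree zero. Since both $\mathcal O_S$ and $f_* \mathcal O_X$ sit in degree zero, any morphism between them in $\D(\QCoh S)$ is an honest morphism of quasi-coherent sheaves; hence the split monomorphism of part (1) is a split monomorphism in $\QCoh S$. This shows $\mathcal O_S \in \add(f_* \mathcal O_X)$, and as $X$ is integral and normal by hypothesis, Proposition \ref{splnorm} concludes that $S$ is integral and normal.

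There is no serious obstacle. The only subtlety I would flag is the passage from a split monomorphism in the derived category to a split monomorphism of sheaves in part (2), which is justified by the identification $\Hom_{\D(\QCoh S)}(\mathcal F, \mathcal G) = \Hom_{\QCoh S}(\mathcal F, \mathcal G)$ for $\mathcal F, \mathcal G$ concentrated in degree zero.
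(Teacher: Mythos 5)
Your proof is correct and follows essentially the same route as the paper: part (1) via Proposition \ref{levelsplit}(1) applied to the adjunction $\mathbf L f^* \dashv \mathbf R f_*$ with $\mathbf L f^*\mathcal O_S=\mathcal O_X$, and part (2) by using affineness to identify $\mathbf R f_*$ with $f_*$ and then invoking Proposition \ref{splnorm}. You even make explicit a step the paper leaves implicit, namely that a split monomorphism in $\D(\QCoh S)$ between complexes concentrated in degree zero is a split monomorphism in $\QCoh S$, via $\Hom_{\D(\QCoh S)}(\mathcal F,\mathcal G)=\Hom_{\QCoh S}(\mathcal F,\mathcal G)$ for objects of the heart.
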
 

\begin{proof}

% Since $f$ is proper, $\mathbf Rf_*$ maps $\operatorname{D}(\operatorname{Coh} X)$ to $\operatorname{D}(\operatorname{Coh} S)$ (\cite[Chapter 23, Theorem 23.17.]{grtz}). Clearly, $\mathbf Lf^*$ maps $\D(\Coh S)$ to $\D(\Coh X)$.

We know that $\mathbf L f^*: \operatorname{D}(\operatorname{\QCoh} S)\to \operatorname{D}(\operatorname{\QCoh} X)$ is left adjoint to $\mathbf R f_*:\operatorname{D}(\operatorname{\QCoh} X)\to \operatorname{D}(\operatorname{\QCoh} S)$ (\cite[Chapter 3, Proposition 3.2.3]{lip}). 

(1) Since $\mathbf Lf^* \mathcal O_S=f^*\mathcal O_S=\mathcal O_X$, the claim is now immediate from Proposition \ref{levelsplit}(1).  

(2) Since $f$ is an affine morphism, hence $\mathbf Rf_* B\simeq f_* B$ (\cite[Tag 0G9R]{stacks-project}). Now we are done by part (1) and \Cref{splnorm}. 
\end{proof}

In view of the definition of derived splinter,  \Cref{splinter}(1) unifies and highly generalizes the non-trivial implications of \cite[Lemma 3.11, 3.12, Theorem 3.13]{lank2024triangulated} by removing all assumptions from the schemes and the morphism, as well as weakening the hypothesis $\mathcal O_S \in \langle \mathbf R f_* \mathcal O_X\rangle_1 $ to $\mathcal O_S \in \langle \mathbf R f_* B\rangle_1 $ for some arbitrary $B\in \D(\text{QCoh } X)$. Similarly, since finite morphisms are affine, \Cref{splinter}(2) highly generalizes the non-trivial implication of \cite[Theorem B]{lank2024triangulated}.

% \begin{cor} Let $S$ be an embeddable variety of characteristic $0$, i.e., there exists a closed immersion $i:S\to Y$ for a smooth variety $Y$. If $\mathcal O_S\in \langle \underline{\Omega}^0_X$ 
    
% \end{cor}

\begin{cor}\label{dsplnorm} Let $S$ be a variety of characteristic $0$. If there exists a resolution of singularities $f: X\to S$ such that $\mathcal O_S \in \langle \mathbf R f_* B\rangle_1 $  for some $B\in \operatorname{D}(\operatorname{QCoh} X)$, then $S$ has rational singularities. 
\end{cor}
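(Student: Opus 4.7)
The plan is to cascade through the results already established in this section. Since the hypothesis of \Cref{dsplnorm} is exactly the input required by \Cref{splinter}(1), the first step is simply to apply that corollary, which yields a split monomorphism $\mathcal O_S \to \mathbf R f_* \mathcal O_X$ in $\D(\QCoh S)$.

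The next step is to extract normality of $S$. Since a resolution of singularities has smooth source, $X$ is normal and integral. Applying the cohomology functor $H^0$, which is additive and hence preserves split monomorphisms, to the derived-level splitting above yields a split monomorphism $\mathcal O_S \to H^0(\mathbf R f_* \mathcal O_X) = f_* \mathcal O_X$ in $\QCoh S$. Thus $\mathcal O_S \in \add(f_* \mathcal O_X)$, and \Cref{splnorm} (applied with $N = \mathcal O_X$) yields that $S$ is a normal integral scheme. Note that this bypasses the affineness restriction in \Cref{splinter}(2), which would otherwise be problematic since resolutions of singularities are typically projective and not affine.

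Finally, to upgrade this to rational singularities, I would invoke the proof of \cite[Theorem 2.12]{bhatt}. In characteristic $0$, the combination of Grauert--Riemenschneider vanishing and Grothendieck--Serre duality implies that whenever $\mathcal O_S \to \mathbf R f_* \mathcal O_X$ is a split monomorphism for a resolution $f$, this map is in fact an isomorphism; equivalently, $R^i f_* \mathcal O_X = 0$ for all $i \geq 1$. Together with the normality established above, this is precisely the definition of $S$ having rational singularities.

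The main obstacle is the third step: the characteristic-zero input cannot be avoided (and is the entire reason characteristic $0$ is hypothesized in the statement), but since it is packaged in \cite[Theorem 2.12]{bhatt}, the proof reduces to a citation. All other ingredients are essentially bookkeeping from the preceding results of this paper.
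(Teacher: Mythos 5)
Your proposal is correct and follows essentially the same route as the paper's proof: apply \Cref{splinter}(1) to get the derived splitting, deduce normality via \Cref{splnorm}, and conclude by the proof of \cite[Theorem 2.12]{bhatt}. Your explicit step of applying $H^0$ to obtain the abelian-level split monomorphism $\mathcal O_S \to f_*\mathcal O_X$ required by \Cref{splnorm} (which the paper leaves implicit, and which is indeed needed since \Cref{splinter}(2) is unavailable for a non-affine resolution) is a correct filling-in of that detail rather than a different argument.
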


\begin{proof} From Corollary \ref{splinter} it follows that the natural morphism $\mathcal O_S \to \mathbf Rf_* \mathcal O_X$ is a split monomorphism. It then follows that $S$ is normal by Proposition \ref{splnorm}. Now our claim follows from the proof of \cite[Theorem 2.12]{bhatt}. 
\end{proof}

Similar to \Cref{splinter}(1), \Cref{dsplnorm} highly generalizes the non-trivial implication of \cite[Proposition 3.15]{lank2024triangulated}.

\begin{cor}\label{newdsplnorm} Let $X, S$ be  schemes of characteristic $0$. If there exists a surjective morphism of schemes $f: X\to S$ such that $X$ is integral, has rational singularities, and $\mathcal O_S \in \langle \mathbf R f_* B\rangle_1 $  for some $B\in \operatorname{D}(\operatorname{QCoh} X)$, then $S$ has rational singularities. 
\end{cor}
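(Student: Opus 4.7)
The approach is to mimic the proof of Corollary \ref{dsplnorm}, first reducing to the smooth-source case by replacing $X$ with a resolution of singularities. Applying Corollary \ref{splinter}(1) directly to the hypothesis $\mathcal O_S \in \langle \mathbf R f_* B\rangle_1$ immediately yields that the unit $\mathcal O_S \to \mathbf R f_*\mathcal O_X$ is a split monomorphism in $\operatorname{D}(\operatorname{QCoh} S)$.

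Next I would pick a resolution of singularities $g\colon Y \to X$, available in characteristic $0$ since $X$ is integral. Because $X$ has rational singularities, $\mathbf R g_* \mathcal O_Y \cong \mathcal O_X$, so $\mathbf R(f\circ g)_* \mathcal O_Y \cong \mathbf R f_* \mathcal O_X$ and the unit $\mathcal O_S \to \mathbf R(f\circ g)_* \mathcal O_Y$ inherits the splitting. In parallel, the projection formula applied to $B' := \mathbf L g^* B$ gives $\mathbf R g_* B' \cong B$, hence $\mathbf R(f\circ g)_* B' \cong \mathbf R f_* B$ and $\mathcal O_S \in \langle \mathbf R(f\circ g)_* B' \rangle_1$. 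We have thereby replaced $X$ with the smooth scheme $Y$ while preserving all the relevant data.

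To finish, I would take $H^0$ of the split monomorphism to obtain $\mathcal O_S \in \add(f_* \mathcal O_X)$; since rational singularities force $X$ to be normal, Proposition \ref{splnorm} (with $N = \mathcal O_X$) then gives that $S$ is integral and normal. Invoking the proof of \cite[Theorem 2.12]{bhatt} with respect to the morphism $f\circ g\colon Y \to S$ from the smooth source $Y$, together with the established splitting of $\mathcal O_S \to \mathbf R(f\circ g)_* \mathcal O_Y$, should then upgrade normality to rational singularities for $S$, precisely as in the proof of Corollary \ref{dsplnorm}.

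The main subtlety I anticipate is in the final invocation: Kov\'acs' criterion embedded in the proof of \cite[Theorem 2.12]{bhatt} is classically phrased for proper morphisms, whereas $f$ (and hence $f\circ g$) is only assumed to be surjective here. One will need to verify that the mere existence of an explicit left inverse to the unit map in $\operatorname{D}(\operatorname{QCoh} S)$ is enough to carry Kov\'acs' descent argument through without properness, or alternatively to bridge to a proper surjection by a further modification of $Y$ (e.g.\ via Chow's lemma applied to a resolution of $S$ dominated by $Y$) and transfer the splitting along a fiber-product construction.
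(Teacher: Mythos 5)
Your opening move matches the paper exactly: Corollary \ref{splinter}(1) gives the splitting of $\mathcal O_S \to \mathbf Rf_*\mathcal O_X$. But from there the paper finishes in one line, citing \cite[Theorem 9.2]{murayama2024relative} --- a descent theorem asserting precisely that a surjective morphism from an integral scheme with rational singularities, whose unit map splits in the derived category, forces rational singularities on the target, with no properness, no finite-type, and no resolution hypotheses. Your substitute route has a genuine gap in exactly the place that citation fills, and in fact two of its steps cannot work at the stated level of generality. First, $X$ and $S$ here are arbitrary schemes of characteristic $0$, not varieties: a resolution of singularities $g\colon Y\to X$ is not known to exist for an arbitrary integral characteristic-$0$ scheme (Temkin's theorem needs quasi-excellence), so your reduction to a smooth source is unavailable; for the same reason, Bhatt's proof of \cite[Theorem 2.12]{bhatt} and the argument of Corollary \ref{dsplnorm} that you are mimicking are confined to schemes of finite type over a field and simply do not apply to this $S$. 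Second, as you yourself flag in your last paragraph, even in the variety case the Kov\'acs-type descent inside Bhatt's proof must be carried through for a merely surjective (non-proper) morphism, and you leave that verification open --- so the proposal is conditional on precisely the statement that Murayama's theorem supplies, and the fiber-product/Chow's-lemma alternative you sketch is not executed.

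The intermediate manipulations are fine where they make sense: $\mathbf Rg_*\mathbf Lg^*B\simeq B$ does follow from the projection formula once $\mathbf Rg_*\mathcal O_Y\simeq\mathcal O_X$ (though transferring $B$ is superfluous --- the splitting of $\mathcal O_S\to \mathbf R(f\circ g)_*\mathcal O_Y$ already follows from $\mathbf Rf_*\mathcal O_X\simeq \mathbf R(f\circ g)_*\mathcal O_Y$), and taking $H^0$ of the splitting to get $\mathcal O_S\in\add(f_*\mathcal O_X)$ and then normality of $S$ via Proposition \ref{splnorm} is correct. But the paper does not even need that detour, since normality is subsumed in the conclusion of \cite[Theorem 9.2]{murayama2024relative}; without a proved substitute for that theorem, your argument does not close.
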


\begin{proof} From Corollary \ref{splinter} it follows that the natural morphism $\mathcal O_S \to \mathbf Rf_* \mathcal O_X$ is a split monomorphism. We are now done by \cite[Theorem 9.2]{murayama2024relative}. 
\end{proof}

\Cref{newdsplnorm} highly generalizes \cite[Theorem 3.14(2)$\implies$(1)]{lank2024triangulated}

\begin{chunk}
A variety $X$ over a field of characteristic zero is said to have Du Bois singularities if the natural map
$\mathcal O_X \to \underline{\Omega}^0_X$ splits \cite{sch, kov}, where $\underline{\Omega}^0_X$ denotes the 0-th graded piece of the DuBois complex of $X$. 
\end{chunk}

A proof similar to \cite[Theorem 3.16(2)$\implies $(1)]{lank2024triangulated} using \Cref{splinter} instead of \cite[Lemma 3.11]{lank2024triangulated} produces the following

\begin{cor} Let $S$ be an embeddable variety of characteristic $0$, i.e., there exists a closed immersion $i:S\to Y$ for a smooth variety $Y$. If $\mathcal O_S\in \langle \underline{\Omega}^0_X\rangle_1$, then $S$ is a DuBois singularity. 
\end{cor}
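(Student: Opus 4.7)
The plan is to identify the natural map $\mathcal{O}_S \to \underline{\Omega}^0_S$ with the unit of a derived adjunction $\mathcal{O}_S \to \mathbf{R}g_*\mathbf{L}g^*\mathcal{O}_S$ for an appropriate proper morphism $g$, and then invoke \Cref{splinter}(1). Since $S$ is embeddable into a smooth variety $Y$, I would begin by taking a (strong) log resolution $f\colon \tilde Y \to Y$ of the pair $(Y,S)$ such that $E := (f^{-1}S)_{\mathrm{red}}$ is a simple normal crossings divisor, and writing $g\colon E\to S$ for the induced proper morphism. I would then appeal to the standard realization of the $0$-th graded piece of the Du Bois complex coming out of such a log resolution (in the formulations of Du Bois, Schwede, or Kov\'acs--Schwede) to obtain an isomorphism $\underline{\Omega}^0_S \simeq \mathbf{R}g_*\mathcal{O}_E$ in $\D(\QCoh S)$, under which the natural map $\mathcal{O}_S \to \underline{\Omega}^0_S$ corresponds to the unit map $\mathcal{O}_S \to \mathbf{R}g_*\mathcal{O}_E$.

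With this identification in hand, the hypothesis $\mathcal{O}_S \in \langle \underline{\Omega}^0_S \rangle_1$ becomes $\mathcal{O}_S \in \langle \mathbf{R}g_*\mathcal{O}_E \rangle_1$. I would then apply \Cref{splinter}(1) to $g\colon E\to S$ with the choice $B = \mathcal{O}_E \in \D(\QCoh E)$ to conclude that the natural map $\mathcal{O}_S \to \mathbf{R}g_*\mathcal{O}_E$, hence $\mathcal{O}_S \to \underline{\Omega}^0_S$, is a split monomorphism. By the definition of Du Bois singularities recalled just before the corollary, this yields the conclusion. Structurally, the argument is exactly the strategy of \cite[Theorem 3.16(2)$\Rightarrow$(1)]{lank2024triangulated} with the more general \Cref{splinter} substituted for the splitting lemma used there.

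The main obstacle I anticipate is the identification step: verifying that the natural morphism $\mathcal{O}_S \to \underline{\Omega}^0_S$ agrees, under the chosen isomorphism $\underline{\Omega}^0_S \simeq \mathbf{R}g_*\mathcal{O}_E$, with the derived unit of adjunction associated to $g$. This compatibility is standard in the Du Bois literature but is the one non-formal input that the argument requires; everything else is a formal consequence of \Cref{splinter}(1) applied to a suitable $g$.
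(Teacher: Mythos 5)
Your proposal is correct and takes essentially the same route as the paper, whose entire proof is the remark that the argument of \cite[Theorem 3.16(2)$\implies$(1)]{lank2024triangulated} goes through with \Cref{splinter} substituted for the splitting lemma used there. The one non-formal input you flag---Schwede's identification of $\mathcal O_S \to \underline{\Omega}^0_S$ with the unit map $\mathcal O_S \to \mathbf R g_* \mathcal O_E$ arising from a log resolution of the pair $(Y,S)$---is precisely the content of that cited argument, so your write-up simply makes explicit the details the paper leaves implicit.
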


\bibliographystyle{plain}
\bibliography{mainbib}

\end{document}